\newtheorem{theorem}{Theorem}
\newtheorem{lemma}{Lemma}
\theoremstyle{definition}
\newcommand{\beq}{\begin{equation}}
\newcommand{\eeq}{\end{equation}}
\begin{document}

\title
{Patterns of primes in arithmetic progressions}

\author
{J\'anos Pintz\thanks{Supported by OTKA Grants NK104183, K100291 and ERC-AdG.~321104.}}

\date{}

\numberwithin{equation}{section}


\maketitle

\section{Introduction}
\label{sec:1}

In their ground-breaking work Green and Tao \cite{GT2008} proved the existence of infinitely many $k$-term arithmetic progressions in the sequence of primes for every integer $k > 0$.
I showed a conditional strengthening of it \cite{Pin2010} according to which if the primes have a distribution level $\vartheta > 1/2$ (for the definition of the distribution level see \eqref{eq:1.1} below), then there exists a constant $C(\vartheta)$ such that we have a positive even $d \leqslant C(\vartheta)$ with the property that $0 < d \leqslant C(\vartheta)$ and for every $k$ there exist infinitely many arithmetic progressions $\{p_i^*\}_{i = 1}^k$ of length $k$ with $p_i^* \in \mathcal P$ ($\mathcal P$ denotes the set of primes) such that $p_i^* + d$ is a prime too, in particular, the prime following $p_i^*$.
After the proof of Zhang \cite{Zhang2014}, proving the unconditional existence of infinitely many bounded gaps between primes (this was proved earlier in our work \cite{GPY2009} under the condition that primes have a distribution level $\vartheta > 1/2$) I showed this without any unproved hypotheses \cite{Pin2015}.

We say that $\theta$ is a distribution level of the primes if
\beq
\label{eq:1.1}
\sum_{q \leqslant x^\theta} \max_{\substack{a\\ (a,q) = 1}} \left| \pi(x, q, a) - \frac{\pi(x)}{\varphi (q)} \right| \ll_{A} \frac{x}{(\log x)^A}
\eeq
holds for any $A > 0$ where the $\ll$ symbol of Vinogradov means that $f(x) = O(g(x))$ is abbreviated by $f(x) \ll g(x)$.

In his recent work James Maynard \cite{May2015} gave a simpler and more efficient proof of Zhang's theorem.
In particular he gave an unconditional proof of a weaker version of Dickson's conjecture \cite{Dic1904} which we abbreviate as Conjecture DHL since Hardy and Littlewood formulated a stronger quantitative version of it twenty years later \cite{HL1923}.

\smallskip
\noindent
{\bf Conjecture} DHL (Prime $k$-tuples Conjecture).
{\it Let $\mathcal H = \{h_1, \dots, h_k\}$ be admissible, which means that for every prime $p$ there exists an integer $a_p$ such that for any $i$ $a_p \not\equiv h_i (\text{\rm mod }p)$.
Then there are infinitely many integers $n$ such that all of $n + h_1,\dots, n + h_k$ are primes.}

\smallskip
The weaker version showed by Maynard (and simultaneously and independently by T. Tao (unpublished)) was that Conjecture DHL $(k, k_0)$ (formulated below) holds for $k \gg k_0^2 e^{4k_0}$.

\smallskip
\noindent
{\bf Conjecture DHL$(k, k_0)$}.
{\it If $\mathcal H$ is admissible of size $k$, then there are infinitely many integers $n$ such that
$\{n + h_i\}_{i = 1}^k$ contains at least $k_0$ primes.}

\smallskip
A brief argument, given by Maynard \cite{May2015} (see Theorem 1.2 of his work) shows that if there exists a $C(k_0)$ such that DHL$(k,k_0)$ holds for $k \geqslant C(k_0)$, then a positive proportion of all admissible $m$-tuples satisfy the prime $m$-tuple conjecture for every $m$ (for the exact formulation see Theorem 1.2 of \cite{May2015}).

The purpose of the present work is to show a common generalization of the result of Maynard (and Tao) and that of Green--Tao.

\begin{theorem}
\label{th:1}
Let $m > 0$ and $\mathcal A = \{a_1, \dots, a_n\}$ be a set of $r$ distinct integers with $r$ sufficiently large depending on $m$.
Let $N(\mathcal A)$ denote the number of integer $m$-tuples $\{h_1, \dots, h_n\} \subseteq \mathcal A$ such that there exist for every $\ell$ infinitely many $\ell$-term arithmetic progressions of primes $\{p_i^*\}_{i = 1}^\ell$ where $p_i^* + h_j$ is also prime for each pair $i, j$.
Then
\beq
\label{eq:1.2}
N(\mathcal A) \gg_m \# \bigl\{(h_1, \dots, h_m) \in \mathcal A \bigr\} \gg_m |A|^m = r^m.
\eeq
\end{theorem}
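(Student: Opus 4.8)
The proof combines Maynard's multidimensional sieve \cite{May2015}, the Green--Tao machinery \cite{GT2008} for producing arithmetic progressions inside a set of positive density relative to a pseudorandom measure, and the combinatorial double counting from the proof of Theorem~1.2 of \cite{May2015}. The plan is to prove first an auxiliary claim and then amplify it by counting. \emph{Auxiliary claim: there is $k=k(m)$ such that every admissible $k$-tuple $\mathcal H=\{h_1,\dots,h_k\}$ contains an $m$-subset $S^\ast$ such that, for every $\ell$, the set $\{\,n:n+h\in\mathcal P\text{ for all }h\in S^\ast\,\}$ contains infinitely many $\ell$-term arithmetic progressions.}

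To prove the claim, fix an admissible $\mathcal H$ and a length $\ell$, and run the sieve of \cite{May2015} on $(N,2N]$ with the $W$-trick ($W=\prod_{p\leqslant D_0}p$, $D_0\to\infty$ slowly), a truncation $R=N^{\eta}$, and a fixed smooth weight $F$ on the simplex; this gives nonnegative weights $w_n=w_n(\mathcal H,N)$ supported on $n$ with every $n+h_i$ coprime to $W$, for which Maynard's mean-value estimates yield
\beq
\label{eq:plan1}
\sum_{N<n\leqslant 2N}w_n^2\Bigl(\#\{\,i\leqslant k:n+h_i\in\mathcal P\,\}-(m-1)\Bigr)\ \gg_m\ \sum_{N<n\leqslant 2N}w_n^2
\eeq
as $N\to\infty$, once $k$ is large enough that $\eta\,M_k>m-1$, where $M_k$ is the relevant sieve ratio, which tends to infinity with $k$. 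Thus the set of $n\in(N,2N]$ for which at least $m$ of $n+h_1,\dots,n+h_k$ are prime carries a proportion $\gg_m 1$ of the total $w_n^2$-mass, uniformly in $N$; attaching to each such $n$ one $m$-subset $S\subseteq\{1,\dots,k\}$ of positions on which it is prime and pigeonholing over the $\binom km$ choices (and then over $N$), one obtains a single $m$-subset $S_\ell\subseteq\mathcal H$ whose associated set has $w_n^2$-density $\geqslant\delta(m)>0$ for infinitely many $N$.

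Next apply Green--Tao's transference on those ranges. The essential input is that, after normalisation and up to constants depending only on $k$ and $F$, the squared sieve weight $w_n^2$ is majorised by a pseudorandom measure $\nu$ on $(N,2N]$ satisfying the linear-forms and correlation conditions of \cite{GT2008} for all systems of $O_\ell(1)$ linear forms; this is exactly the step that forces $\eta$ small in terms of $\ell$. Granting it, the normalised $w_n^2$-weighted indicator of the $S_\ell$-piece is dominated by $\nu$ and has mean $\geqslant\delta'(m)>0$, so the relative Szemer\'edi theorem produces a nontrivial $\ell$-term progression $\{\,n_0+(t-1)b\,\}_{t=1}^{\ell}$ inside it; letting $N$ range over the infinitely many admissible values (with blocks $(N,2N]$ eventually disjoint) gives infinitely many such progressions, and $p_t^\ast:=n_0+(t-1)b$ satisfies $p_t^\ast+h\in\mathcal P$ for all $t$ and all $h\in S_\ell$. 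Provided $k=k(m)$ can be kept independent of $\ell$, the $S_\ell$ all lie in the finite family of $m$-subsets of $\mathcal H$, so some $S^\ast$ equals $S_\ell$ for infinitely many $\ell$; and since an $m$-subset that serves length $\ell$ serves all smaller lengths, this $S^\ast$ serves every $\ell$, proving the claim. Finally, distribute $\mathcal A$ among the residue classes modulo $\prod_{p\leqslant k}p$: some class contains at least $r/\prod_{p\leqslant k}p$ elements, every $k$-subset of a single class is admissible, so $\mathcal A$ has $\gg_m r^k$ admissible $k$-tuples (this needs $r$ large in terms of $m$); each contains a good $m$-subset by the claim, and each $m$-subset lies in at most $\binom{r-m}{k-m}\ll r^{k-m}$ of them, whence $N(\mathcal A)\gg_m r^m$, i.e.\ \eqref{eq:1.2}.

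I expect the main obstacle to be the tension noted above: \eqref{eq:plan1} wants the truncation as large as $R=N^{1/4-\ve}$ if $k$ is to remain bounded, while verifying the linear-forms condition for $w_n^2$ for $\ell$-term progressions wants $R=N^{\eta(\ell)}$ with $\eta(\ell)\to0$. Reconciling these --- so that $k=k(m)$ is genuinely independent of $\ell$, which is what keeps the implied constant in \eqref{eq:1.2} a function of $m$ alone --- is the heart of the matter and calls for a careful joint choice of $W$, $R$ and $F$ and a careful tracking of the $\ell$-dependence through both Maynard's sieve asymptotics and the pseudorandomness estimates. The remaining ingredients --- Maynard's mean values, the two pigeonholes, the relative Szemer\'edi theorem, and the counting --- are routine.
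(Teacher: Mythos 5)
You have correctly identified the central difficulty --- the clash between the truncation level $R=N^{\theta/2-\varepsilon}$ that Maynard's sieve needs in order to keep $k=k(m)$ bounded, and the much smaller level $R=N^{\eta(\ell)}$ that the linear-forms and correlation conditions for $\ell$-term progressions demand --- but you have not resolved it, and since you yourself flag this reconciliation as ``the heart of the matter'', the auxiliary claim, and with it the whole argument, remains unproved. As written, making $\eta$ small enough for the transference forces $M_k>(m-1)/\eta(\ell)$, hence $k=k(m,\ell)\to\infty$ with $\ell$, which destroys the pigeonhole that is supposed to produce a single $m$-subset $S^\ast$ valid for all $\ell$.

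The paper resolves this by \emph{not} feeding the Maynard weight into the Green--Tao machinery at all; the two sieves are decoupled. First (Lemma 1 of the paper) the portion of $S_1=\sum w_n$ coming from $n$ for which some $n+h_i$ has a prime factor below $n^{c_1(k)}$ is shown to be $O\bigl(c_1(k)\log N/\log R\bigr)S_1$, hence negligible, so one may restrict to $n$ with all $n+h_i$ free of small prime factors. On that restricted set one has the pointwise bound $w_n\ll_k(\log R)^{2k}$, and dividing the weighted lower bound by this upper bound converts ``a positive proportion of the $w_n$-mass sits on $n$ with at least $m$ primes among the $n+h_i$'' into the \emph{unweighted} statement that $\gg_k N/\log^kN$ integers $n\in[N,2N)$ have at least $m$ primes among the $n+h_i$ and all $n+h_i$ almost prime. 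By Selberg's sieve the full almost-prime set has cardinality $\ll N/\log^kN$, so this is a set of positive relative density inside the almost primes, and the Main Lemma (Theorem 5 of \cite{Pin2010}) --- whose proof runs the Green--Tao transference against a GPY-type majorant for almost-prime tuples, whose level can be taken small in terms of $\ell$ because a majorant carries no lower-bound requirement --- then yields $\ell$-term progressions for every $\ell$ in any such set, with $k$ depending on $m$ only. Your two pigeonholes and the final double counting over admissible $k$-tuples agree with the paper's use of Maynard's Theorem 1.2 argument and are fine; the missing idea is precisely the passage from weighted to unweighted counts via the almost-prime restriction.
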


This is an unconditional generalization of the result in \cite{Pin2010}.

\section{Preparation. First part of the proof of Theorem~\ref{th:2}}
\label{sec:2}

The arguments in the last three paragraphs of Section~4 of \cite{May2015} can be applied here practically without any change and so, similarly to Theorems 1.1 and 1.2 of \cite{May2015}, our Theorem~\ref{th:1} will also follow in essentially the same way from (the weaker)

\begin{theorem}
\label{th:2}
Let $m$ be a positive integer, $\mathcal H = \{h_1, \dots, h_k\}$ be an admissible set of $k$ distinct non-negative integers $h_i \leqslant H$, $k = \lceil C m^2 e^{4m}\rceil$ with a sufficiently large absolute constant~$C$.
Then there exists an $m$-element subset
\beq
\label{eq:2.1}
\{h_1', h_2', \dots, h_m'\} \subseteq \mathcal H
\eeq
such that for every positive integer $\ell$ we have infinitely many $\ell$-element non-trivial arithmetic progressions of primes $p_i^*$ such that $p_i^* + h_j' \in \mathcal P$ for $1 \leqslant  i \leqslant \ell$, $1 \leqslant j \leqslant m$, further $p_i^* + h_j'$ is always the $j$-th prime following $p_i^*$.
\end{theorem}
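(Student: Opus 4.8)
The plan is to combine the Green--Tao theorem with Maynard's multidimensional sieve through a transference argument; after a harmless reduction I would assume $0\in\mathcal H$ (this can be arranged since $k$ is large, and is in any case needed for the displayed shifts $h_j'$ to lie in $\mathcal H$). First I would set up the sieve: fix a large $N$, a primorial $W=\prod_{p\le D_0}p$, and a residue $v\ (\mathrm{mod}\ W)$ with $(v+h_i,W)=1$ for all $i$, possible by admissibility; with a fixed smooth symmetric $F$ supported on the standard simplex put $\lambda_{\mathbf d}=\mu(d_1)\cdots\mu(d_k)\,F\bigl(\tfrac{\log d_1}{\log R},\dots,\tfrac{\log d_k}{\log R}\bigr)$ for squarefree $\mathbf d$ coprime to $W$ with $\prod_i d_i\le R=N^{1/4-\varepsilon}$, and $w_n=\bigl(\sum_{d_i\mid n+h_i\ \forall i}\lambda_{\mathbf d}\bigr)^2$ for $n\equiv v\ (\mathrm{mod}\ W)$, $n\in(N,2N]$. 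Since $k=\lceil Cm^{2}e^{4m}\rceil$ with $C$ large and the primes have level $\vartheta=1/2$ by Bombieri--Vinogradov, Maynard's first-moment inequality gives, for an appropriate $F$ and all large $N$,
\[
\sum_n w_n\Bigl(\sum_{i=1}^k\mathbf{1}_{\mathcal P}(n+h_i)-m\Bigr)\ \ge\ \eta\sum_n w_n,\qquad \eta=\eta(m)>0 .
\]
As $\sum_i\mathbf{1}_{\mathcal P}(n+h_i)\le k$, a positive $w$-proportion of the $n$ (i.e.\ a set on which $\sum_n w_n\gg\sum_n w_n$) would have at least $m+1$ of the $n+h_i$ prime; a refinement exploiting the symmetry of $w_n$ in the $h_i$ (so that the placements of the primes among the coordinates are, up to bounded factors, equidistributed) would let me demand in addition that $n+h_i$ be prime for the coordinate $h_i=0$, i.e.\ that $n$ itself be prime.

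Next I would force the primes so produced to be \emph{consecutive}. For each $t\in[0,H]$ with $t\notin\mathcal H$ and $(v+t,W)=1$ nothing yet prevents $n+t$ from being prime, so I would multiply $w_n$ by a lower-bound sieve weight detecting a prime factor of $n+t$ in $(D_0,z]$, with $z=z(H)$ a fixed power of $D_0$. These are only $O(H)$ conditions, involving moduli of size $O(z^{H})$, so for $N$ large enough (so that $Rz^{H}<N^{1/2-\varepsilon}$) a standard sieve estimate shows the $w$-mass drops by at most a factor $\gg_H 1$. Then for $n$ in the resulting set $\mathcal G_N$ (still of positive $w$-proportion) the primes in $[n,n+H]$ are exactly $n$ together with those $n+h_i$ that are prime, and the $m$ smallest of the latter, at coordinates $h_1'<\dots<h_m'$ of $\mathcal H$, make $n,n+h_1',\dots,n+h_m'$ a run of $m+1$ consecutive primes. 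Pigeonholing over the at most $\binom{k}{m}$ possible coordinate patterns, I would fix one $m$-subset $\{h_1',\dots,h_m'\}\subseteq\mathcal H$ that occurs for a positive $w$-proportion along an infinite sequence $N_\nu\to\infty$: for each $\nu$ one obtains $\mathcal G'_{N_\nu}\subseteq(N_\nu,2N_\nu]\cap\mathcal P$ with $\sum_{n\in\mathcal G'_{N_\nu}}w_n\gg\sum_n w_n$ such that $n+h_j'$ is the $j$-th prime following $n$ for all $n\in\mathcal G'_{N_\nu}$ and $1\le j\le m$.

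It would then remain to find, for every $\ell$, a non-trivial $\ell$-term arithmetic progression inside $\mathcal G'_{N_\nu}$; letting $\nu$ vary then produces infinitely many. Because $w_n$ is unbounded, $\mathcal G'_{N_\nu}$ need not be dense among the primes in the naive sense, so I would run the Green--Tao machinery relative to $w$: normalise $w_n$ together with its consecutivity factor to a measure $\nu_n$, verify that $\nu$ is \emph{pseudorandom}, i.e.\ satisfies the linear forms condition and the correlation condition of Green--Tao, and apply the relative Szemer\'edi theorem to $f_n=\mathbf{1}_{\mathcal G'_{N_\nu}}(n)\,w_n$, which obeys $0\le f\le\nu$ and $\mathbb E_{n\in(N_\nu,2N_\nu]}f_n\gg 1$. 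This yields $\mathbb E_{n,d}\prod_{j=0}^{\ell-1}f_{n+jd}\gg_\ell 1$, hence a non-trivial $\ell$-term progression $p_1^*,\dots,p_\ell^*$ on which $f$, and therefore $\mathbf{1}_{\mathcal G'_{N_\nu}}$, is positive; these $p_i^*$ are primes with $p_i^*+h_j'$ the $j$-th prime following $p_i^*$, which is exactly the assertion.

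I expect the main obstacle to be this last step: proving the linear forms and correlation estimates for the modified Maynard--Tao weight so that the relative Szemer\'edi/transference theorem applies. This is the technical heart -- Green and Tao carried out the analogous verification for the GPY weights, and the new points are to push it through for the Maynard shape of $\lambda_{\mathbf d}$ and to absorb the bounded-modulus consecutivity factor without destroying pseudorandomness. A secondary point requiring care is the argument in the first step that places the first prime of the constellation at the sieve point $n$ (and the attendant reduction to $0\in\mathcal H$), since this is what keeps the $h_j'$ genuine elements of $\mathcal H$.
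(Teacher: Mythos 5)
Your overall architecture (Maynard's multidimensional sieve to locate the prime constellation, then Green--Tao transference to find long progressions among the good $n$) is the right one, but the step you yourself flag as ``the technical heart'' --- verifying the linear forms and correlation conditions for a measure built from the Maynard--Tao weight $w_n$ (further multiplied by your consecutivity factors) --- is precisely the part that is missing, and it is not a routine extension of the Green--Tao computation for the one-dimensional GPY weight: the correlations of $\lambda_{d_1,\dots,d_k}$ do not factor the same way, and nothing in your write-up addresses this. The paper avoids this verification entirely, and that is its key idea: Lemma~1 shows that the $w$-mass carried by those $n$ for which some $n+h_i$ has a prime factor below $n^{c_1(k)}$ is $O(c_1(k)\log N/\log R)\cdot S_1$, hence negligible; combining this with Maynard's inequality $S_2>((\theta/2-\varepsilon)(M_k-\varepsilon)+o(1))S_1$ and the pointwise bound $w_n\ll(\log R)^{2k}$ on the almost-prime support, one gets a set $\mathcal N(\mathcal H)$ of $n$ with at least $m+1$ primes among the $n+h_i$, with \emph{every} $n+h_i$ almost prime, and of cardinality $\gg N/\log^kN$. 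The Main Lemma (Theorem~5 of \cite{Pin2010}), already established with the classical Green--Tao pseudorandom majorant, then applies to \emph{any} such subset of the almost-prime tuples and delivers the $\ell$-term progressions. In other words, the Maynard weight is used only to produce a dense subset of a set for which transference is already known; no new pseudorandomness estimate is needed. Without either supplying that estimate for your $\nu$ or rerouting through an almost-prime restriction of this kind, your argument does not close.

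Two secondary points. First, your claim that ``symmetry of $w_n$ in the $h_i$'' lets you demand that the prime occur at the fixed coordinate $h_i=0$ is not justified: the symmetry is in $F$, not in the arithmetic of the fixed shifts, and knowing that at least $m+1$ of the $k$ positions are prime for a positive $w$-proportion of $n$ does not let you prescribe which positions; the paper does not attempt this (it extracts whichever $m$-subset of minimal diameter occurs, and pigeonholes). Second, your device of multiplying $w_n$ by lower-bound sieve weights to force $n+t$ composite for $t\in[0,H]\setminus\mathcal H$ is both delicate (it perturbs the main term you need for the Maynard inequality) and unnecessary: the paper simply discards such $n$ using the upper-bound sieve count \eqref{eq:2.28}, which shows there are only $O_k(NH/\log^{k+1}N)=o(N/\log^kN)$ of them, too few to affect the density hypothesis of the Main Lemma.
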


\noindent
{\bf Remark}.

\begin{itemize}
\item[(i)] For $\ell = m = 1$ this is Zhang's theorem,

\item[(ii)] for $\ell = 1$, $m$ arbitrary this is the Maynard--Tao theorem,

\item[(iii)] for $m = 0$, $\ell$ arbitrary this is the Green--Tao theorem,

\item[(iv)] for $m = 1$, $\ell$ arbitrary this was proved under the condition that primes have a distribution level $\theta > 1/2$ in \cite{Pin2010}, unconditionally (using Zhang's method) in \cite{Pin2015}.
\end{itemize}

In order to show our Theorem~\ref{th:2} we will follow the scheme of \cite{May2015}.
We therefore emphasize just a few notations here, but we will use everywhere Maynard's notation throughout our work.
Similarly to his work, $k$ will be a fixed integer, $\mathcal H = \{h_1, \dots, h_k\} \subseteq [0, H]$ a fixed admissible set.
Any constants implied by the $\ll$ and $0$ notations may depend on $k$ and~$H$.
$N$~will denote a large integer and asymptotics will be understood as $N \to \infty$.
Most variables will be natural numbers, $p$ (with or without subscripts) will denote always primes, $[a, b]$ the least common multiple of $[a, b]$ (however, sometimes the closed interval $[a, b]$).
We will weight the integers with a non-negative weight $w_n$ which will be zero unless $n$ lies in a fixed residue class $\nu_0$ $(\text{\rm mod }W)$ where $W = \prod\limits_{p \leqslant D_0} p$.
$D_0$ tends in \cite{May2015} slowly to infinity with~$N$.
His choice is actually $D_0 = \log\log\log N$.
However, it is sufficient to choose
\beq
\label{eq:2.1masodszor}
D_0 = C^*(k),
\eeq
with a sufficiently large constant $C^*(k)$, depending on~$k$.

The proof runs similarly in this case as well just we lose the asymptotics then, but the dependence on $D_0$ is explicitly given in \cite{May2015}.
The weights $w_n$ are defined in (2.4) of \cite{May2015} as
\beq
\label{eq:2.2}
w_n = \biggl(\sum_{d_i \mid n + h_i\, \forall i} \lambda_{d_1, \dots, d_k}\biggr)^2.
\eeq

The choice of $\lambda_{d_1, \dots, d_k}$ will be through the choice of other parameters $y_{r_1, \dots, r_k}$ by the aid of the identity
\beq
\label{eq:2.3}
\lambda_{d_1, \dots, d_k} = \biggl(\prod\limits_{i = 1}^k \mu(d_i) d_i\biggr) \sum_{\substack{r_1, \dots, r_k\\
d_i \mid r_i \,\forall i\\
(r_i, W) = 1}} \frac{\mu\Bigl(\prod\limits_{i = 1}^k r_i\Bigr)^2}{\prod\limits_{i = 1}^k \varphi(r_i)} y_{r_1, \dots, r_k}
\eeq
whenever $\Bigl(\prod\limits_{i = 1}^k d_i, W\Bigr) = 1$ and $\lambda_{d_1, \dots, d_r} = 0$ otherwise.
Here $y_{r_1, \dots, r_k}$ will be defined by the aid of a piecewise differentiable function $F$, the distribution $\theta > 0$ of the primes, with $R = N^{\theta / 2 - \varepsilon}$ as
\beq
\label{eq:2.4}
y_{r_1, \dots, r_k} = F \left(\frac{\log r_1}{\log R}, \dots, \frac{\log r_k}{\log R}\right)
\eeq
where $F$ will be real valued, supported on
\beq
\label{eq:2.5}
R_k = \Biggl\{(x_1, \dots, x_k) \in [0, 1]^k : \sum_{i = 1}^k x_i \leqslant 1\Biggr\}.
\eeq

All this is in complete agreement with the notation of Proposition~1 and (6.3) of \cite{May2015}.

Our proof will also make use of the main pillars of Maynard's proof, his Propositions 1--3, which we quote now with the above notations as

\smallskip
\noindent
{\bf Proposition 1'}.
{\it With the above notation let
\beq
\label{eq:2.6}
S_1 := \sum_{\substack{n\\
N \leqslant n < 2N\\
n \equiv \nu_0 \ (\text{\rm mod }W)}} w_n, \ \ \
S_2 := \sum_{\substack{n\\
N \leqslant n < 2N\\
n \equiv \nu_0 \ (\text{\rm mod }W)}} \biggl( w_n  \sum_{i = 1}^k \chi_{\mathcal P}(n + h_i) \biggr) ,
\eeq
where $\chi_{\mathcal P}(n)$ denotes the characteristic function of the primes.
Then we have as $N \to \infty$
\beq
\label{eq:2.7}
S_1 = \frac{\left(1 + O\left(\frac1{D_0}\right)\right) \varphi(W)^k N(\log R)^k}{W^{k + 1}} I_k(F),
\eeq
\beq
\label{eq:2.8}
S_2 = \frac{\left(1 + O\left(\frac1{D_0}\right)\right) \varphi(W)^k N(\log R)^{k + 1}}{W^{k + 1}} \sum_{j = 1}^k J_k^{(j)}(F),
\eeq
provided $I_k(F) \neq 0$ and $J_k^{(j)}(F) \neq 0$ for each $j$, where}
\beq
\label{eq:2.9}
I_k(F) = \int\limits_0^1 \dots \int\limits_0^1 F(t_1, \dots, t_k)^2 dt_1 \dots dt_k,
\eeq
\beq
\label{eq:2.10}
I_k^{(j)}(F) = \int\limits_0^1 \dots \int\limits_0^1 \biggl(\int\limits_0^1 F(t_1, \dots, t_k)dt_j\biggr)^2
dt_1 \dots dt_{j - 1} dt_{j + 1} \dots dt_k.
\eeq

\smallskip
\noindent
{\bf Proposition 2'}.
{\it Let $\mathcal S_k$ denote the set of piecewise differentiable functions with the earlier given properties, including $I_k(F) \neq 0$ and $J_k^{(j)}(F) \neq 0$ for $1 \leqslant j \leqslant k$.
Let
\beq
\label{eq:2.11}
M_k = \sup \frac{\sum\limits_{j = 1}^k J_k^{(j)}(F)}{I_k(F)}, \ \ r_k = \left\lceil \frac{\theta M_k}{2}\right\rceil
\eeq
and let $\mathcal H$ be a fixed admissible sequence $\mathcal H = \{h_1, \dots, h_k\}$ of size~$k$.
Then there are infinitely many integers $n$ such that at least $r_k$ of the $n + h_i$ $(1 \leqslant i \leqslant k)$ are simultaneously primes.}

\smallskip
\noindent
{\bf Proposition 3'}.
{\it $M_{105} > 4$ and $M_k > \log k - 2 \log\log k - 2$ for $k > k_0$.}

\smallskip
\noindent
{\bf Remark}.
In the proof Maynard will use for every $k$ an explicitly given function $F = F_k$ satisfying the above inequality.
Therefore the additional dependence on $F$ will be actually a dependence on~$k$.

\smallskip
The main idea (beyond the original proof of Maynard--Tao) is that in the weighted sum $S_1$ in \eqref{eq:2.6} all those weights $w_n$ for numbers $n \in [N, 2N]$ are in total negligible for which any of the $n + h_i$ terms $(1 \leqslant i \leqslant k)$ has a small prime factor $p$ (i.e.\ with a sufficiently small $c_1(k)$ depending on $k$, $p \mid n + h_i$, $p < n^{c_1(k)}$).

To make it more precise let $c_1(k)$ be a sufficiently small fixed constant (to be determined later and fixed for the rest of the work).
Let $P^-(n)$ be the smallest prime factor of~$n$.
Then we have

\begin{lemma}
\label{lem:1}
We have
\beq
\label{eq:2.21}
S_1^- = \sum_{\substack{N \leqslant n < 2N\\ n \equiv \nu_0 \ (\text{\rm mod }W)\\
P^- \Bigl(\prod\limits_{i = 1}^k(n + h_i)\Bigr) < n^{c_1(k)}}}
w_n \ll_{k, H} \frac{c_1(k) \log N}{\log R} S_1.
\eeq
\end{lemma}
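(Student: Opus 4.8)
The plan is to pass, by a union bound, to a bound for a single small prime $p$, and then to sum over $p$ by Mertens' theorem. Since $w_n=0$ unless $n\equiv\nu_0\ (\mathrm{mod}\ W)$ and (by admissibility) $\nu_0$ is chosen so that each $\nu_0+h_i$ is coprime to $W=\prod_{p\le D_0}p$, every $n$ appearing in $S_1^-$ has all $n+h_i$ coprime to $W$, hence $P^-\bigl(\prod_i(n+h_i)\bigr)>D_0$; so for each such $n$ there are an index $i$ and a prime $p$ with $D_0<p<n^{c_1(k)}<(2N)^{c_1(k)}$ and $p\mid n+h_i$. By $w_n\ge 0$,
\[
S_1^-\ \le\ \sum_{i=1}^{k}\ \sum_{D_0<p<(2N)^{c_1(k)}}T(p,i),\qquad
T(p,i):=\sum_{\substack{N\le n<2N\\ n\equiv\nu_0\ (\mathrm{mod}\ W)\\ p\mid n+h_i}}w_n .
\]
Thus it suffices to prove the per-prime estimate
\[
T(p,i)\ \ll_{k,H}\ \frac{\log p}{p\,\log R}\,S_1\qquad(D_0<p<(2N)^{c_1(k)},\ 1\le i\le k),
\]
because then Mertens' theorem $\sum_{p\le y}(\log p)/p=\log y+O(1)$ gives at once
\[
S_1^-\ \ll_{k,H}\ \frac{S_1}{\log R}\sum_{p<(2N)^{c_1(k)}}\frac{\log p}{p}\ \ll_{k,H}\ \frac{S_1}{\log R}\bigl(c_1(k)\log N+O(1)\bigr)\ \ll_{k,H}\ \frac{c_1(k)\log N}{\log R}\,S_1 .
\]

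To prove the per-prime estimate I would rerun Maynard's evaluation of $S_1$ (his Lemmas 5--6, underlying Proposition~1$'$) with the single extra congruence $n\equiv-h_i\ (\mathrm{mod}\ p)$. Expanding $w_n=\sum_{d,e}\lambda_{d}\lambda_{e}\prod_j\mathbf{1}\bigl([d_j,e_j]\mid n+h_j\bigr)$, in the support of $\lambda_d\lambda_e$ the moduli $[d_1,e_1],\dots,[d_k,e_k]$ are pairwise coprime and coprime to $W$ (a common prime $q$ of $[d_i,e_i]$ and $[d_j,e_j]$ would divide $h_i-h_j$, so $q\le H<D_0$, contradicting $(d_i,W)=1$), and likewise $p$ is coprime to $W$ and to $[d_j,e_j]$ for $j\ne i$ (else $p\mid h_i-h_j$, i.e.\ $p\le H<D_0<p$); hence the inner sum over $n$ equals $N\bigl(W\,[\,p,[d_i,e_i]\,]\prod_{j\ne i}[d_j,e_j]\bigr)^{-1}+O(1)$. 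The $O(1)$'s, summed over all $d,e$ and all of the $\le k\,\pi((2N)^{c_1(k)})$ pairs $(p,i)$, contribute $\ll_k (2N)^{c_1(k)}\bigl(\sum_d|\lambda_d|\bigr)^2\ll_k (2N)^{c_1(k)}R^2(\log R)^{O_k(1)}=o(S_1)$, since $R=N^{\theta/2-\varepsilon}$ with $\theta\le 1$, $c_1(k)$ is taken $<\varepsilon$, and $S_1\asymp_k N(\log N)^k$ (note $W=O_k(1)$ now that $D_0=C^*(k)$ is a constant). So $T(p,i)$ is, up to this acceptable error, $\tfrac NW$ times a restricted version of Maynard's quadratic form.

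The heart of the matter is to carry the prime $p$ through Maynard's diagonalization. Writing $[\,p,[d_i,e_i]\,]^{-1}=[d_i,e_i]^{-1}\bigl(\tfrac1p+(1-\tfrac1p)\mathbf{1}[p\mid[d_i,e_i]]\bigr)$, the $1/p$-summand reproduces $\tfrac1pS_1\bigl(1+O(1/D_0)\bigr)$. In the $\mathbf{1}[p\mid[d_i,e_i]]$-summand one puts $d_i=p\,d_i'$ (or $e_i=p\,e_i'$) and substitutes \eqref{eq:2.3}--\eqref{eq:2.4}; because of the factor $\mu(p)=-1$ and $\varphi(p\,r')=(p-1)\varphi(r')$, the net effect is to replace the profile $y_r=F\bigl(\tfrac{\log r_1}{\log R},\dots,\tfrac{\log r_k}{\log R}\bigr)$ by its $i$-th translate $y_r^{(i)}:=F\bigl(\dots,\tfrac{\log r_i+\log p}{\log R},\dots\bigr)$, and after the collapse (again a $k$-fold Mertens sum, as in Lemma~5) one expects
\[
T(p,i)\ =\ \frac1p\cdot\frac NW\sum_{\substack{(r_j,W)=1\ \forall j,\ p\nmid r_i\\ \mu(\prod_j r_j)^2=1}}\frac{\bigl(y_r-y_r^{(i)}\bigr)^2}{\prod_j\varphi(r_j)}\,\bigl(1+o(1)\bigr)\ +\ (\text{acceptable error}).
\]
Evaluating this, and using that $F$ is bounded and supported on the bounded simplex $R_k$ of \eqref{eq:2.5} so that $\int_{[0,1]^k}\bigl(F(x)-F(x+\tfrac{\log p}{\log R}e_i)\bigr)^2dx\ll_k\tfrac{\log p}{\log R}$ (the slab within $\tfrac{\log p}{\log R}$ of the face $\sum_jx_j=1$ contributing $\ll\|F\|_\infty^2\tfrac{\log p}{\log R}$, the rest $\ll_k(\tfrac{\log p}{\log R})^2$ by piecewise Lipschitzness), one obtains $T(p,i)\ll_{k,H}\tfrac1p\cdot\tfrac{\log p}{\log R}\,S_1$. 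Since $\tfrac{\log p}{\log R}\le 2c_1(k)/(\theta-2\varepsilon)$ is bounded, all of this is uniform in $p$.

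The step I expect to be the real obstacle is the one just described: obtaining the exact ``translated-profile'' form of $T(p,i)$, equivalently the exact cancellation of the $\tfrac1pS_1$ from the $1/p$-summand against the leading term $-\tfrac1pS_1(1+o(1))$ of the $\mathbf{1}[p\mid[d_i,e_i]]$-summand. A crude size bound of the restricted quadratic form only yields $T(p,i)\ll_k\tfrac1pS_1$, hence the useless $S_1^-\ll_kS_1\log\log N$; the saving $\tfrac{\log p}{\log R}$ --- which is exactly what makes the sum over $p$ converge to $O\!\bigl(c_1(k)\log N/\log R\bigr)$ --- is the gain coming from the boundedness of the support $R_k$ of $F$, and pinning it down amounts to repeating Maynard's Lemma~5 computation essentially verbatim while keeping the auxiliary prime $p$ in sight throughout.
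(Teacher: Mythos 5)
Your proposal is correct and follows essentially the same route as the paper: the paper likewise reduces Lemma~\ref{lem:1} to the per-prime estimate $S_{1,p}^*\ll \frac{\log p}{p\log R}S_1$ (its Lemma~\ref{lem:2}) and sums over $p$ by Mertens, and it proves that estimate by carrying the extra prime $p$ through the Selberg diagonalization to arrive at exactly your ``translated-profile'' form $\sum_{r}\mu^2(r)\varphi(r)^{-1}(y_r-y_{rp})^2$, then using the bounded support of $F$ to extract the $\frac{\log p}{\log R}$ saving. The step you flag as the real obstacle is handled in the paper by the identity \eqref{eq:3.3} (quoted from Selberg/Greaves/GGPY), precisely as you anticipate.
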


Since $R = N^{\frac{\theta}{2} - \varepsilon}$, $S_1^- / S_1$ will be arbitrarily small if $c_1(k)$ is chosen sufficiently small.
The proof of Lemma~\ref{lem:1} will be postponed to Section~\ref{sec:3}.
This means that during the whole proof we can neglect those numbers $n$ for which $P^- \biggl(\prod\limits_{i = 1}^k (n + h_i)\biggr) < n^{c_1(k)}$ and it is sufficient to deal with numbers $n$ with $n + h_i$ being almost primes for each $i = 1,2, \dots, k$ (by which we mean that $n + h_i$ has only prime factors at least $n^{c_1(k)}$).
A trivial consequence of this fact is that for such numbers $n$ \ \,$\prod\limits_{i = 1}^k(n + h_i)$ has a bounded number of prime factors.
Consequently we have for these numbers $n$ by (5.9) and (6.3)
\beq
\label{eq:2.22}
w_n \ll_{c_1(k), k} \lambda_{\max}^2 \ll_{c_1(k), k} y_{\max}^2 (\log R)^{2k} \ll_{c_1(k),k,F}(\log R)^{2k} \ll (\log R)^{2k}
\eeq
with the convention that the constants implied by the $\ll$ and $O$ constants can depend on $k$ and both $c_1(k)$ and $F = F_k$ will only depend on~$k$.

The essence of Maynard's proof is that (see (4.1)--(4.4) of \cite{May2015})
\beq
\label{eq:2.23}
S_2 > \left(\left(\frac{\theta}{2} - \varepsilon\right) (M_k - \varepsilon) + O\left(\frac1{D_0}\right)\right) S_1
\eeq
which directly implies the existence of infinitely many values $n$ such that there are at least
\beq
\label{eq:2.24}
r_k = \left\lceil \frac{\theta M_k}{2}\right\rceil
\eeq
primes among $n + h_i$ $(1 \leqslant i \leqslant k)$.

Let us denote, in analogy with \eqref{eq:2.6}
\beq
\label{eq:2.25}
S_1^+ := \sum_{\substack{n\\ N\leqslant n < 2N\\ n\equiv \nu_0 \ (\text{\rm mod }W) \\
P^-\bigl(\prod\limits_{i = 1}^k(n + h_i)\bigr) \geqslant n^{c_1(k)}}} \!\!\! w_n ,
\ \ \
S_2^+ := \sum_{\substack{n\\ N\leqslant n < 2N\\ n\equiv \nu_0 \ (\text{\rm mod }W) \\
P^-\bigl(\prod (n + h_i)\bigr) \geqslant n^{c_1(k)}}} \!\!\! w_n \biggl(\sum_{i = 1}^k \chi_{\mathcal P} (n + h_i)\biggr).
\eeq
Then Lemma~\ref{lem:1}, i.e.\ \eqref{eq:2.21} implies together with \eqref{eq:2.23} that (if $c_1(k)$ and $\varepsilon$ are chosen sufficiently small, $D_0$ sufficiently large, then)
\beq
\label{eq:2.26}
S_2^+ > \left(\left(\frac{\theta}{2} - \varepsilon\right) (M_k - \varepsilon) + O(c_1(k)) + O \left(\frac1{D_0}\right) + o(1)\right) S_1,
\eeq
which implies the existence of a large number of $n$ values in $[N, 2N)$, $n \equiv \nu_0$ $(\text{\rm mod }W)$ with at least $r_k$ primes among them and additionally almost primes with $P^-(n + h_i) > n^{c_1(k)}$ in all other components $i \in [1, k]$.

Together with \eqref{eq:2.22} this implies
\beq
\label{eq:2.27}
S_1^* := \sum_{\substack{n \\ N \leqslant n < 2N\\ n\equiv \nu_0 \ (\text{\rm mod }W)\\
P^-\bigl(\prod\limits_{i = 1}^k (n + h_i)\bigr) > n^{c_1(k)}\\
\#\{i; n + h_i \in \mathcal P\} \geqslant r_k}} \!\! 1
\gg \frac{S_1}{(\log R)^{2k}} = \frac{\left(1 + O\left(\frac1{D_0}\right)\right)\varphi(W)^k NI_k(F)}{W^{k + 1}(\log R)^k}.
\eeq

Since $D_0 = C^*(k)$ we have $\varphi(W)^k / W^{k + 1} \geqslant C'(k)$.
Thus a positive proportion (depending on $k$) of the integers $n \in [N, 2N)$ with $n \equiv \nu_0$ $(\text{\rm mod }W)$ and $P^-\biggl(\prod\limits_{i = 1}^k (n + h_i)\biggr) > n^{c_1(k)}$ contain at least $r_k$ primes among $n + h_i$ $(1 \leqslant  i \leqslant k)$.
This follows from \eqref{eq:2.27} and
\beq
\label{eq:2.28}
\sum_{\substack{N \leqslant n < 2N,\, n \equiv \nu_0 \ (\text{\rm mod }W)\\
P^-\bigl(\prod\limits_{i = 1}^k(n + h_i)\bigr) > n^{c_1(k)}}}\!\!\! 1 \ll \frac{N}{\log^k N}
\eeq
where the implied constant in the $\ll$ symbol depends only on $k$, $H$ and $c_1(k)$, therefore only on $k$, finally.
\eqref{eq:2.28} is a consequence of Selberg's sieve (see, for example,
Theorem 5.1 of \cite{HR1974} or Theorem~2 in \S~2.2.2 of \cite{Gre2001}).

If Lemma~\ref{lem:1} will be proved (see Section~\ref{sec:3}) then Theorem~\ref{th:2} will follow from Theorem~5 of \cite{Pin2010} which we quote here as

\smallskip
\noindent
{\bf Main Lemma}.
{\it Let $k$ be an arbitrary positive integer and $\mathcal H = \{h_1, \dots, h_k\}$ be an admissible $k$-tuple.
If the set $\mathcal N(\mathcal H)$ satisfies with constants $c_1(k)$, $c_2(k)$
\beq
\label{eq:2.29}
\mathcal N(\mathcal H) \subseteq \left\{n; P^-\biggl(\prod_{i = 1}^k (n + h_i)\biggr) \geqslant n^{c_1(k)}\right\}
\eeq
and
\beq
\label{eq:2.30}
\#\bigl\{n \leqslant X, n \in \mathcal N(\mathcal H)\bigr\} \geqslant \frac{c_2(k) X}{\log^k X}
\eeq
for $X > X_0$, then $N(\mathcal H)$ contains $\ell$-term arithmetic progressions for every~$\ell$.}

In order to see that the extra condition that the given prime pattern occurs also for consecutive primes we have to work in the following way.
For any given $\mathcal H = \{h_1, \dots, h_k\}$ with $k = \lceil Cm^2 \log m \rceil$ we choose an $m$-element subset $\mathcal H' = \{h_1', \dots, h_m'\} \subseteq \mathcal H$ with minimal diameter $h_m' - h_1'$
such that with some constants $c_1'(k), c_2'(k) > 0$ the relations \eqref{eq:2.29}--\eqref{eq:2.30}, more exactly
\beq
\label{eq:uj2.23}
\# \biggl\{n \leqslant X; \, P^{-} \biggl(\prod\limits_{i = 1}^k (n + h_i)\biggr) \geqslant n^{c_1'(k)}, \ n + h_i' \in \mathcal P \ (1 \leqslant i \leqslant m)\biggr\} \geqslant \frac{c_2'(k)X}{\log^k X}
\eeq
should hold for $X > X_0$.

By the condition that $\mathcal H'$ has minimal diameter we can delete from our set $\mathcal N(\mathcal H)$ those $n$'s for which there exists any $h_i \in \mathcal H \setminus \mathcal H'$, $h_1' < h_i < h_m'$
such that beyond \eqref{eq:uj2.23} also $n + h_i \in \mathcal P$ would hold.

On the other hand we can also neglect those $n \in \mathcal N(\mathcal H)$ for which with a given $h \in [1, H]$, $h \notin \mathcal H_k$ we would have additionally $n + h \in \mathcal P$ since the total number of such $h \in [1, H]$ is by \eqref{eq:2.28} at most
\beq
\label{eq:uj2.24}
O_k \left(\frac{NH}{\log^{k + 1} N}\right) = o\left(\frac{N}{\log^k N}\right)
\eeq
since our original $H$ in Theorem~\ref{th:2} was fixed.

We note that the above way of specifying the $m$-element sets $\mathcal H_m'$ for which we have arbitrarily long (finite) arithmetic progressions of $n$'s such that $n + h_i'$ $(1 \leqslant i \leqslant m)$ would be a given bounded pattern of \emph{consecutive} primes does not change the validity of the argument of Maynard (see Theorem 1.2 of \cite{May2015}) which shows that the above is true for a positive proportion of all $m$-element sets (the proportion depends on~$m$).

\section{Proof of Lemma~\ref{lem:1}. End of the proof of Theorem~\ref{th:2}}
\label{sec:3}

The proof of Lemma~\ref{lem:1} will be a trivial consequence of the following

\begin{lemma}
\label{lem:2}
The following relation holds for any prime $D_0 < p < N^{c_1}$ and all $i \in [1, \dots, k]$:
\beq
\label{eq:3.1}
S_{1, p}^* := \sum_{\substack{N \leqslant n < 2N\\
n \equiv \nu_0 \ (\text{\rm mod }W)\\
p \mid n + h_i}} w_n \ll_{F, H, k} \frac{\log p}{p \log R} \sum_{\substack{N \leqslant n < 2N\\
n \equiv \nu_0 \ (\text{\rm mod }W)}} w_n = \frac{\log p}{p \log R} S_1.
\eeq
\end{lemma}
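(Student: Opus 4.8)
The plan is to expand the square defining $w_n$ and exploit the coprimality constraint imposed by the extra condition $p \mid n + h_i$. Recall from \eqref{eq:2.2}--\eqref{eq:2.3} that $w_n = \left(\sum_{d_i \mid n + h_i \,\forall i} \lambda_{d_1,\dots,d_k}\right)^2$, so after squaring and swapping the order of summation we obtain
\beq
S_{1,p}^* = \sum_{d_1,\dots,d_k}\sum_{e_1,\dots,e_k} \lambda_{d_1,\dots,d_k}\lambda_{e_1,\dots,e_k} \sum_{\substack{N \leqslant n < 2N \\ n \equiv \nu_0 \ (\mathrm{mod}\ W)\\ [d_i,e_i]\mid n+h_i\ \forall i \\ p\mid n+h_i}} 1,
\eeq
which is exactly the inner sum treated in Lemma~5.1 / equation (5.5) of \cite{May2015}, except that the modulus is now $W\prod_i[d_i,e_i]$ replaced by its least common multiple with $p$. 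First I would note that $\lambda_{d_1,\dots,d_k}$ is supported on squarefree $\prod d_i$ coprime to $W$ with $\prod d_i < R$ (and likewise for $e$), so in particular $p \nmid \prod_j[d_j,e_j]$ for $j \neq i$ automatically once $p > D_0$ and $p \leqslant R$; the only interaction is in the $i$-th coordinate, where $p$ may or may not already divide $[d_i,e_i]$.

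The key step is then a case split. If $p \mid [d_i,e_i]$ already, the congruence $p \mid n+h_i$ is implied and contributes nothing new, so the local density is unchanged; this portion of the sum is $\ll (\log p)/(p\log R)$ times the full $S_1$ because $\lambda$ is supported away from such highly divisible tuples with a gain — more precisely, the relative size of the part of $S_1$ with $p \mid d_i$ or $p\mid e_i$ is itself $O\!\left(\frac{\log p}{p}\right)$ by the standard estimate for the $F$-integrals restricted to $x_i \geqslant \log p/\log R$, which follows from the piecewise differentiability and boundedness of $F$. If $p \nmid [d_i,e_i]$, then by CRT the extra congruence genuinely cuts the count of admissible $n$ by a factor $1/p + O(1/N)$ (valid since $p < N^{c_1}$ and $c_1$ is small, so $p$ is much smaller than the length $N/(W\prod[d_i,e_i])$ of the progression, using $\prod[d_i,e_i] < R^2 = N^{\theta - 2\varepsilon}$ and $c_1$ chosen against $\theta$). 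Carrying this $1/p$ through the Maynard main-term computation verbatim yields a bound of the shape $\frac{1}{p} \cdot C(F,H,k)\, S_1$, and the additional $\log p$ in \eqref{eq:3.1} is the slack we can afford — it is genuinely needed only to absorb the diagonal/error terms and the $p\mid[d_i,e_i]$ case uniformly.

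Concretely the steps are: (1) expand $w_n$ and interchange summations; (2) perform the inner arithmetic-progression count, splitting on whether $p\mid[d_i,e_i]$; (3) in the generic case substitute the factor $1/p$ and re-run the diagonalization $y_{r_1,\dots,r_k} = \sum \cdots \lambda$ that Maynard uses to pass from $\lambda$'s to $y$'s, observing that the extra Euler factor $\left(1 - \frac1p\right)^{-1}$ or $\frac1{\varphi(p)}$ produced is $1 + O(1/p)$ and hence harmless; (4) recognize the resulting main term as $\frac{1}{p}$ times the integral $I_k(F)$ that appears in \eqref{eq:2.7}, i.e.\ $\frac1p S_1$ up to the $1 + O(1/D_0)$ factors, which is even stronger than claimed; (5) bound the error terms (the off-diagonal contribution and the $N/\prod[d_i,e_i]$ rounding errors) by $O(R^2/N) \cdot (\text{something polynomial in }\log N)$, which is negligible since $R^2 = N^{\theta - 2\varepsilon} = o(N)$; (6) handle the degenerate case $p \mid [d_i,e_i]$ by the restricted-integral estimate above, giving $O\!\left(\frac{\log p}{p\log R}\right) S_1$. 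The main obstacle — and the reason the $\log p$ appears — is step (6) together with keeping the error terms in step (5) uniform in $p$ across the whole range $D_0 < p < N^{c_1}$: for $p$ close to $N^{c_1}$ the progression $n \equiv \nu_0 \ (\mathrm{mod}\ W)$, $p\mid n+h_i$, $[d_i,e_i]\mid n+h_i$ can be quite short, so one must check that $c_1$ is taken small enough (depending on $\theta$, equivalently on the support condition $\sum x_i \leqslant 1$) that $\prod[d_i,e_i]\cdot p < N^{1-\delta}$ still holds for some fixed $\delta > 0$; this is precisely the place where the eventual smallness of $c_1(k)$ in Lemma~\ref{lem:1} is forced. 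Everything else is a routine rerun of the computations in Sections 5--6 of \cite{May2015}.
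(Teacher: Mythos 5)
Your steps (1), (2) and (5) reproduce the paper's setup (expand $w_n$, compute the arithmetic--progression count with the extra modulus $p$, split on whether $p\mid[d_1,e_1]$, control the $O(R^{2+\varepsilon})$ errors), but there is a genuine gap at steps (3)--(4) and (6), and it sits exactly at the heart of the lemma. You assert that the generic case $p\nmid[d_i,e_i]$ produces a main term $\frac1p S_1$ ``which is even stronger than claimed.'' This has the comparison backwards: since $\log p/\log R=o(1)$ for every fixed or slowly growing $p>D_0$, the target bound $\frac{\log p}{p\log R}S_1$ in \eqref{eq:3.1} is far \emph{smaller} than $\frac1p S_1$, and the extra factor $\log p/\log R$ is not ``slack'' but the whole point. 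It is also indispensable for the application: Lemma~\ref{lem:1} follows by summing \eqref{eq:3.1} over $D_0<p<N^{c_1}$ and over $i$, and $\sum_{D_0<p<N^{c_1}}\frac{\log p}{p\log R}\ll \frac{c_1\log N}{\log R}$ is small, whereas $\sum_{D_0<p<N^{c_1}}\frac1p$ is of order $\log\bigl(c_1\log N/\log D_0\bigr)$ and is not. If each $S_{1,p}$ really were only $O\bigl(\frac1pS_1\bigr)$, the bound \eqref{eq:2.21} would fail and the argument of Section~\ref{sec:2} would collapse.

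The missing idea is that the two cases $p\mid[d_1,e_1]$ and $p\nmid[d_1,e_1]$ must \emph{not} be estimated separately: each one individually is genuinely of order $\frac1pS_1$, and the claimed bound only emerges from the cancellation between them. The paper therefore recombines them into the single quadratic form with denominator $\frac{[d_1,e_1,p]}{p}\prod_{i\geqslant2}[d_i,e_i]$ and applies the Selberg/GGPY identity \eqref{eq:3.3}, whose diagonalization \eqref{eq:3.10} turns the combined sum into $\sum_{u}\frac{\prod\mu^2(u_i)}{\prod\varphi(u_i)}\bigl(y_{u_1,\dots,u_k}-y_{u_1p,u_2,\dots,u_k}\bigr)^2$. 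Only this \emph{difference} structure, together with the Lipschitz continuity of $F$ which makes each difference $\ll_F\log p/\log R$ (see \eqref{eq:3.11}), yields the decisive factor $\log p/\log R$ on top of the local density $1/p$. Your diagonalization in step (3) produces $y_u^2$ rather than $(y_u-y_{u_1p,\dots})^2$, i.e.\ it keeps only one of the three pieces of the expanded square $y_u^2-2y_uy_{u_1p,\dots}+y_{u_1p,\dots}^2$ and so cannot see the cancellation; likewise your step (6) bounds the $p\mid[d_i,e_i]$ piece in isolation (and, incidentally, the $O(\log p/p)$ relative bound you quote for it would give $O\bigl(\frac{\log p}{p}\bigr)S_1$, not $O\bigl(\frac{\log p}{p\log R}\bigr)S_1$). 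The remarks about uniformity in $p$ and the choice of $c_1$ are correct but peripheral.
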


\begin{proof}
It is clear that it is enough to show this for $i = 1$, for example.
During the proof we will use the analogue of Lemma 6 of \cite{GGPY2010} for the special case $k = 1$, $\delta = p \in \mathcal P$ and for squarefree $n$ with
\beq
\label{eq:3.2}
f(n) = n \ \ \ f_1(n) = \mu * f(n) = \prod_{p \mid n} (p - 1) = \varphi(n)
\eeq
which is as follows:
\beq
\label{eq:3.3}
T_p := \sum_{d, e} \frac{\lambda_d \lambda_e}{[d, e, p]/p} = \sum_{\substack{r\\ p + r}} \frac{\mu^2(r)}{\varphi(r)} (y_r - y_{rp})^2 .
\eeq
This form appears as the last displayed equation on page 85 of Selberg \cite{Sel1991} or equation (1.9) on page 287 of Greaves \cite{Gre2001}.
We note the general starting condition that similarly to \cite{May2015} the numbers $W$, $[d_1, e_1], \dots, [d_k, e_k]$ will be always coprime to each other.

Writing $n + h_1 = pm$ we see that we have for any $\varepsilon > 0$ and denoting $\sum^* $ for the conditions $n \in [N, 2N)$, $n \equiv \nu_0$ $(\text{\rm mod }W)$; $d_i, e_i \mod n + h_i$ $(2 \leqslant i \leqslant k)$
\beq
\label{eq:3.4}
S_{1,p} = \sum\nolimits_1 + \sum\nolimits_2 + O(R^{2 + \varepsilon})
\eeq
where
\beq
\label{eq:3.5}
\sum\nolimits_1 =  \underset{\substack{p\nmid [d_1, e_1]\\ d_1 \mid m, e_1 \mid m}}{\sum\nolimits^*} \lambda_{d_1, \dots, d_k} \lambda_{e_1, \dots, e_k},
\eeq
\beq
\label{eq:3.6}
\sum\nolimits_2 =  \underset{\substack{p\mid [d_1, e_1]\\ d_1 \mid pm, e_1 \mid pm}}{\sum\nolimits^*}
\lambda_{d_1, \dots, d_k} \lambda_{e_1, \dots, e_k}.
\eeq

Distinguishing further in $\sum_2$ according to $p^2 \mid d_1 e_1$ or not we obtain from \eqref{eq:3.5} and \eqref{eq:3.6} for any $\varepsilon > 0$
\beq
\label{eq:3.7}
\sum\nolimits_1 = \frac{N}{pW} \underset{p \nmid [d_1, e_1]}{\sum\nolimits^*} \frac{\lambda_{d_1, \dots, d_k} \lambda_{e_1, \dots, e_k}}{\prod\limits_{i = 1}^k [d_i, e_i]} + O (R^{2 + \varepsilon})
\eeq
and
\begin{align}
\label{eq:3.8}
\sum\nolimits_2
&= \frac{N}{pW} \Biggl\{\Biggl(\underset{d_1 = pd_1', p\nmid e_1}{\sum\nolimits^*} \frac{\lambda_{pd_1', \dots, d_k} \lambda_{e_1, \dots, e_k}}{[d_1', e_1] \prod\limits_{i = 2}^k [d_i, e_i]} +
\underset{e_1 = pe_1', p\nmid d_1}{\sum\nolimits^*}
\frac{\lambda_{d_1, \dots, d_k} \lambda_{pe_1', \dots, e_k}}{[d_1, e_1'] \prod\limits_{i = 2}^k [d_i, e_i]} \Biggr)
\\
&\quad + \underset{d_1 = pd_1',\, e_1 = pe_1'}{\sum\nolimits^*} \frac{\lambda_{pd_1', \dots, d_k} \lambda_{e_1'p, \dots, e_k}}{[d_1', e_1'] \prod\limits_{i = 1}^k [d_i, e_i]} \Biggr\} + O(R^{2 + \varepsilon}).
\nonumber
\end{align}
Consequently we have
\beq
\label{eq:3.9}
S_{1, p} = \frac{N}{pW} \sum\nolimits^* \frac{\lambda_{d_1,\dots, d_k} \lambda_{e_1, \dots, e_k}}{ \frac{[d_1, e_1, p]}{p} \prod\limits_{i = 2}^k [d_i e_i]} + O(R^{2 + \varepsilon}).
\eeq

Let us denote the sum in \eqref{eq:3.9} analogously to \eqref{eq:3.3} by $T_{p, 1}$.
Then, similarly to \eqref{eq:3.3} we obtain using additionally the argument of Section~5 of \cite{May2015}
\beq
\label{eq:3.10}
T_{p, 1} = \sum_{u_1, \dots, u_k} \frac{\prod\limits_{i = 1}^k \mu^2(u_i)}{\prod\limits_{i = 1}^k \varphi(u_i)} \bigl(y_{u_i, \dots, u_k} - y_{u_1p, u_2, \dots, u_k}\bigr)^2.
\eeq

However by the choice (6.3) of \cite{May2015} we have
\begin{align}
\label{eq:3.11}
\bigl(y_{u_1, \dots, u_k} - y_{u_1 p, u_2, \dots, u_k }\bigr)^2\!
&=\! F\! \left(\!\frac{\log u_1}{\log R}, ..., \frac{\log u_k}{\log R}\!\right)^{\!2} \!-\! F\!
\left(\! \frac{\log u_1 \! +\!  \log p}{\log R},..., \frac{\log u_k}{\log R}\! \right)^{\! 2}\\
&\ll_F \frac{\log p}{\log R} \ll \frac{\log p}{\log R},
\nonumber
\end{align}
since $F$ depends only on $k$, and hence the constant implied by the $\ll$ symbol may depend on $k$.
Hence we have by Proposition (4.1) of \cite{May2015}
\beq
\label{eq:3.12}
T_{p,1} \ll \frac{N}{W} \cdot \frac{\log p}{p \log R} \sum_{(u_1, W) = 1} \frac{\prod\limits_{i = 1}^k (\mu^2 (u_i))}{\prod\limits_{i = 1}^k \varphi(u_i)} \ll \frac{\log p}{p \log R} \cdot S_1
\eeq
which proves Lemma~\ref{lem:2} and thereby Lemma~\ref{lem:1} and Theorem~\ref{th:2}.
\end{proof}

\noindent
J\'anos Pintz\\
R\'enyi Mathematical Institute\\
of the Hungarian Academy of Sciences\\
Budapest, Re\'altanoda u. 13--15\\
H-1053 Hungary\\
e-mail: pintz.janos@renyi.mta.hu


\begin{thebibliography}{99999}

\frenchspacing

\bibitem[Dic~1904]{Dic1904}
L. E. Dickson,
A new extension of Dirichlet's theorem on prime numbers,
{\it Messenger Math. (2)} {\bf 33} (1904), 155--161.

\bibitem[GPY~2009]{GPY2009}
D. A. Goldston, J. Pintz and C. Y. Y{\i}ld{\i}r{\i}m,
Primes in tuples I,
{\it Ann. of Math.} {\bf 170} (2009), 819--862.

\bibitem[GGPY~2010]{GGPY2010}
D. A. Goldston, S. W. Graham, J. Pintz and C. Y. Y{\i}ld{\i}r{\i}m,
Small gaps between products of two primes,
{\it Proc. London Math. Soc. (3)} {\bf 98} (2009), no. 3, 741--774.

\bibitem[Gre~2001]{Gre2001}
G. Greaves, {\it Sieves in Number Theory}, Springer, Berlin, 2001.

\bibitem[GT~2008]{GT2008}
B. Green, T. Tao, The primes contain arbitrarily long arithmetic progressions,
{\it Ann. of Math. (2)} {\bf 167} (2008), no. 2, 481--547.

\bibitem[HR~1974]{HR1974}
H. Halberstam, H.-E. Richert,
{\it Sieve Methods}, Academic Press, New York, 1974.

\bibitem[HL~1923]{HL1923}
G. H. Hardy, J. E. Littlewood,
Some Problems of `Partitio Numerorum'. III. On the expression of a number as a sum of primes,
{\it Acta Math.} {\bf 44} (1923), 1--70.

\bibitem[May~2015]{May2015}
J. Maynard, Small gaps between primes,
{\it Ann. of Math. (2)} {\bf 181} (2015), no. 1, 383--413.

\bibitem[Pin~2010]{Pin2010}
J. Pintz, Are there arbitrarily long arithmetic progressions of twin primes?
in: \emph{An irregular mind. Szemer\'edi is 70}, pp. 525--559,
Bolyai Soc. Math. Studies 21, Springer, 2010.

\bibitem[Pin~2015]{Pin2015}
J. Pintz,
Polignac Numbers, Conjectures of Erd\H{o}s and Gaps between Primes, Arithmetic Progressions in Primes, and the bounded Gap Conjecture, to appear and arXiv: 1305.6289v1, 27 May 2013.

\bibitem[Sel~1991]{Sel1991}
A. Selberg,
{\it Collected papers}. Vol. II, Sprinber, Berlin, 1991.

\bibitem[Zhang~2014]{Zhang2014}
Yitang Zhang, Bounded gaps between primes,
{\it Ann. of Math (2)} {\bf 179} (2014), no. 2, 1121--1174.

\end{thebibliography}
\end{document}